\setlist{nosep}
\theoremstyle{definition}
\newtheorem{defin}{Definition}[section]
\theoremstyle{plain}
\newtheorem{theo}[defin]{Theorem}
\newtheorem{pro}[defin]{Proposition}
\theoremstyle{definition}
\newtheorem{exm}[defin]{Example}
\newtheorem{rem}[defin]{Remark}
\newtheorem{rems}[defin]{Remarks}
\renewcommand{\H}{\mathcal{H}}
\newcommand{\nor}{\|\cdot\|}
\renewcommand{\l}{\langle}
\renewcommand{\r}{\rangle}
\newcommand{\N}{\mathbb{N}}
\newcommand{\Z}{\mathbb{Z}}
\newcommand{\R}{\mathbb{R}}
\newcommand{\pint}{\l\cdot,\cdot\r}
\newcommand{\pin}[2]{\l#1 , #2\r}
\newcommand{\mez}{\frac{1}{2}}
\renewcommand{\phi}{\varphi}
\numberwithin{equation}{section}
\renewcommand\labelenumi{\emph{(\roman{enumi})}}
\renewcommand\theenumi\labelenumi
\fillast \fontsize{12}{15}\scshape}{\thesection.}{0.8em}{}
\fillast \fontsize{11}{12}\scshape}{\thesubsection.}{0.8em}{}
\begin{document}
	
\thispagestyle{plain}

\begin{center}
	\large
	{\uppercase{\bf Estimate of the spectral radii of \\Bessel multipliers and consequences}} \\
	\vspace*{0.5cm}
	{\scshape{Rosario Corso}}
\end{center}

\normalsize 
\vspace*{1cm}	

\small 

\begin{minipage}{12.5cm}
	{\scshape Abstract.} Bessel multipliers are operators defined from two Bessel sequences of elements of a Hilbert space and a complex sequence, and have frame multipliers as particular cases.  In this paper an estimate of the spectral radius of a Bessel multiplier is provided involving the cross Gram operator of the two sequences. 	
	As an upshot, it is possible to individuate some regions of the complex plane where the spectrum of a multiplier of dual frames is contained.  
\end{minipage}

\vspace*{.5cm}

\begin{minipage}{12.5cm}
	{\scshape Keywords:} Bessel multipliers, dual frames, spectral radius, spectrum.
\end{minipage}

\vspace*{.5cm}

\begin{minipage}{12.5cm}
	{\scshape MSC (2010):}  42C15, 47A10, 47A12.
\end{minipage}

\normalsize

\section{Introduction}

Bessel multipliers, as introduced in \cite{Balazs_basic_mult}, are operators in Hilbert spaces which have been extensively studied \cite{Balazs_inv_mult2,Corso_distr_mult,Balazs_inv_mult,Dual_mult}, occur in various fields of applications \cite{Balazs_appl,Gazeau,Matz} and include the class of frame multipliers \cite{Corso_mult1,Corso_mult2,FeiNow,JAA,LA}. 
Recently, in \cite{Corso_mult2}, given a frame multiplier some regions of the complex plane containing the spectrum have been identified. In order to present the main contributions of this paper, which follows the line of \cite{Corso_mult2}, we need to give some definitions and preliminary results. 

A {\it Bessel sequence} of a separable Hilbert space $\H$ (with inner product $\pint$ and norm $\nor$) is a sequence $\varphi=\{\varphi_n\}_{n\in \N}$ of elements of $\H$ such that  
$$
\sum_{n\in \N} |\pin{f}{\varphi_n}|^2\leq B_\varphi \|f\|^2, \qquad \forall f \in \H
$$
for some $B_\varphi>0$ (called a {\it Bessel bound} of $\varphi$).  
A sequence $\varphi=\{\varphi_n\}_{n\in \N}$ is a {\it frame} for $\H$ if there exist $A_\varphi,B_\varphi>0$ such that
\begin{equation}
	\label{def_frame}
	A_\varphi \|f\|^2\leq \sum_{n\in \N} |\pin{f}{\varphi_n}|^2\leq B_\varphi \|f\|^2, \qquad \forall f \in \H.
\end{equation}
Given two Bessel sequences $\varphi=\{\varphi_n\}_{n\in \N},\psi=\{\psi_n\}_{n\in \N}$ of $\H$ and $m=\{m_n\}_{n\in \N}$ a bounded complex sequence (in short, $m\in \ell^\infty(\N)$) it is possible to define a bounded operator $M_{m,\varphi,\psi}$ on $\H$ 
in the following way
$$M_{m,\varphi,\psi}f=\sum_{n \in \N} m_n\pin{f}{\psi_n} \varphi_n \qquad f\in \H.$$ 
This operator is said the {\it Bessel multiplier} of $\varphi$, $\psi$ with {\it symbol} $m$. It thus consists of three processes: analysis through the sequence $\psi$, multiplication of the analysis coefficients by $m$ and synthesis processes by $\varphi$. When $\varphi$ and $\psi$ are frames, $M_{m,\varphi,\psi}$ is called a {\it frame multiplier}. 

Since a Bessel multiplier is a bounded operator, its spectrum is contained in some disk and, more precisely, the following bound has been given. 

 \begin{pro}[{\cite[Proposition 1]{Corso_mult2}}]
	\label{spec_intro}
	The spectrum $\sigma(M_{m,\varphi,\psi})$ of any Bessel multiplier $M_{m,\varphi,\psi}$ is contained in the closed disk centered the origin with radius $\sup_{n} |m_n| {B_\varphi}^\mez {B_\psi}^\mez,$ where $B_\varphi$ and $B_\psi$ are Bessel bounds of $\varphi$ and $\psi$, respectively. 
\end{pro}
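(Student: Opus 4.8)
The plan is to estimate the operator norm $\n{M_{m,\varphi,\psi}}$ and then invoke the elementary spectral inclusion $\sigma(T)\sub\{z\in\C:|z|\le \n{T}\}$, valid for every bounded operator $T$ on $\H$; this inclusion follows from the convergence of the Neumann series $\sum_{k\geq 0} z^{-k-1}T^k$ (which exhibits the inverse of $zI-T$) whenever $|z|>\n{T}$. Thus it suffices to show that $\n{M_{m,\varphi,\psi}}\le \sup_n|m_n|\,{B_\varphi}^\mez {B_\psi}^\mez$.

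To this end I would factor the multiplier through $\ell^2(\N)$. Introduce the analysis operator $C_\psi:\H\to\ell^2(\N)$, $C_\psi f=\{\pin{f}{\psi_n}\}_{n\in\N}$, the diagonal multiplication operator $M_m:\ell^2(\N)\to\ell^2(\N)$, $M_m\{c_n\}=\{m_nc_n\}$, and the synthesis operator $D_\varphi:\ell^2(\N)\to\H$, $D_\varphi\{c_n\}=\sum_{n\in\N}c_n\varphi_n$. A direct inspection of the defining formula shows $M_{m,\varphi,\psi}=D_\varphi M_m C_\psi$, whence $\n{M_{m,\varphi,\psi}}\le\n{D_\varphi}\,\n{M_m}\,\n{C_\psi}$.

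It then remains to bound the three factors. The Bessel inequality for $\psi$ reads precisely $\n{C_\psi f}^2=\sum_{n\in\N}|\pin{f}{\psi_n}|^2\le B_\psi\n{f}^2$, so $\n{C_\psi}\le {B_\psi}^\mez$. For the middle factor, $\n{M_m\{c_n\}}^2=\sum_{n\in\N}|m_n|^2|c_n|^2\le(\sup_n|m_n|)^2\sum_{n\in\N}|c_n|^2$, giving $\n{M_m}\le\sup_n|m_n|$. For the synthesis operator I would use the adjoint relation $D_\varphi=C_\varphi^*$: a short computation with $\pint$ confirms $\pin{D_\varphi\{c_n\}}{f}=\pin{\{c_n\}}{C_\varphi f}$ for every $f\in\H$ and every finitely supported $\{c_n\}$, so that $\n{D_\varphi}=\n{C_\varphi}\le {B_\varphi}^\mez$ by the Bessel bound of $\varphi$. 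Combining the three estimates yields the desired norm bound, and together with the spectral inclusion this proves the proposition.

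There is no real obstacle here; the only point deserving a little care is the synthesis operator $D_\varphi$, since one must first guarantee that the series $\sum_{n\in\N} c_n\varphi_n$ converges in $\H$ for every $\{c_n\}\in\ell^2(\N)$ and that the resulting map is bounded with the stated norm. Both facts are obtained most cleanly by identifying $D_\varphi$ as the Hilbert-space adjoint of the everywhere-defined bounded operator $C_\varphi$, which automatically makes $D_\varphi$ bounded with $\n{D_\varphi}=\n{C_\varphi}$.
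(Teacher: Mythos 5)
Your proof is correct. The paper does not reprove this proposition (it is quoted from \cite{Corso_mult2}), but your argument — the factorization $M_{m,\varphi,\psi}=D_\varphi M_m C_\psi$, the norm bounds $\|C_\psi\|\leq {B_\psi}^{1/2}$, $\|M_m\|\leq\sup_n|m_n|$, $\|D_\varphi\|=\|C_\varphi\|\leq {B_\varphi}^{1/2}$, and the inclusion $\sigma(T)\subseteq\{z:|z|\leq\|T\|\}$ — is exactly the standard one implicit in the paper's preliminaries and in the proof of its Theorem 1.3.
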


A special case occurs when $\varphi$ and $\psi$ are {\it dual frames}, i.e. two frames satisfying the condition\footnote{or, equivalently, the condition $\displaystyle f=\sum_{n\in \N} \pin{f}{\phi_n}\psi_n$ for every $f\in \H$.}
\begin{equation}
	\label{dual_frames}
	f=\sum_{n\in \N} \pin{f}{\psi_n}\varphi_n, \qquad \forall f\in \H.
\end{equation}
In this setting it was possible to find more precise regions where the spectra are contained, as stated in the following result. 

\begin{pro}[{\cite[Propositions 2 and 3]{Corso_mult2}}]
	\label{pertII}
	Let $\varphi,\psi$ be dual frames for $\H$ with upper bounds $B_\varphi,B_\psi$, respectively, and let $m\in \ell^\infty(\N)$. 
	\begin{enumerate}
		\item If $m$ is contained in the disk of center $\mu$ with radius $R$, then $\sigma(M_{m,\varphi,\psi})$ is contained in the disk of center $\mu$ with radius $R{B_\varphi}^\mez {B_\psi}^\mez$.
		\item If $m$ is a real sequence, then $\sigma(M_{m,\varphi,\psi})$ is contained in the disk of center $$\frac 12 (\sup_{n} m_n +\inf_{n} m_m)$$ with radius $$\frac 12 (\sup_{n} m_n -\inf_{n} m_m){B_\varphi}^\mez {B_\psi}^\mez.$$
		\item If $\psi$ is the canonical dual\footnote{among all the dual frames of $\phi$ there is a special one called the canonical dual; the definition is given in Section \ref{sec:pre}.} of $\varphi$, then $\sigma(M_{m,\varphi,\psi})$ is contained in the closed convex hull of $m$.  
	\end{enumerate}
\end{pro}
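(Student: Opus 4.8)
The plan is to treat parts \emph{(i)} and \emph{(ii)} together and to reserve the real work for part \emph{(iii)}. The common engine is the observation that the defining equation \eqref{dual_frames} says precisely that $M_{\mathbf 1,\varphi,\psi}=I$, where $\mathbf 1=\{1\}_{n\in\N}$ is the constant symbol, together with the linearity of the map $m\mapsto M_{m,\varphi,\psi}$ in the symbol. For part \emph{(i)}, if $m$ lies in the disk of center $\mu$ and radius $R$, I would write $m=\mu\mathbf 1+(m-\mu\mathbf 1)$, so that $M_{m,\varphi,\psi}=\mu I+M_{m-\mu\mathbf 1,\varphi,\psi}$. Since $\sup_n|m_n-\mu|\le R$, Proposition~\ref{spec_intro} places $\sigma(M_{m-\mu\mathbf 1,\varphi,\psi})$ in the disk centered at the origin of radius $R\,B_\varphi^\mez B_\psi^\mez$; translating the spectrum by $\mu$ (i.e.\ $\sigma(\mu I+N)=\mu+\sigma(N)$) gives the claim. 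Part \emph{(ii)} is then immediate: for a real sequence $m$ the values $m_n$ all lie in the interval $[\inf_n m_n,\sup_n m_n]$, which is the diameter of the disk of center $\frac12(\sup_n m_n+\inf_n m_n)$ and radius $\frac12(\sup_n m_n-\inf_n m_n)$, so part \emph{(i)} applies verbatim.

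For part \emph{(iii)} I would pass to the analysis/synthesis picture. Writing $C_\varphi f=\{\pin{f}{\varphi_n}\}_n$ for the analysis operator, $D_\varphi=C_\varphi^*$ for the synthesis operator, and $S=D_\varphi C_\varphi$ for the (positive, boundedly invertible) frame operator of $\varphi$, any Bessel multiplier factors as $M_{m,\varphi,\psi}=D_\varphi\Lambda_m C_\psi$, where $\Lambda_m$ denotes multiplication by $m$ on $\ell^2(\N)$. Since $\psi$ is the canonical dual, $\psi_n=S^{-1}\varphi_n$ and hence $C_\psi=C_\varphi S^{-1}$, giving $M_{m,\varphi,\psi}=D_\varphi\Lambda_m C_\varphi S^{-1}$.

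The key step is to recognize $U:=C_\varphi S^{-\mez}\colon\H\to\ell^2(\N)$ as an isometry: indeed $U^*U=S^{-\mez}D_\varphi C_\varphi S^{-\mez}=S^{-\mez}SS^{-\mez}=I$. Conjugating by $S^{\mez}$ then yields
\begin{equation*}
S^{-\mez}M_{m,\varphi,\psi}S^{\mez}=S^{-\mez}D_\varphi\Lambda_m C_\varphi S^{-\mez}=U^*\Lambda_m U,
\end{equation*}
so $M_{m,\varphi,\psi}$ is similar to $U^*\Lambda_m U$ and in particular $\sigma(M_{m,\varphi,\psi})=\sigma(U^*\Lambda_m U)$. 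Finally, for a unit vector $f\in\H$ the isometry property gives $\n{Uf}^2=\n{f}^2=1$, whence
\begin{equation*}
\pin{U^*\Lambda_m U f}{f}=\pin{\Lambda_m Uf}{Uf}=\sum_{n\in\N}m_n\,|(Uf)_n|^2
\end{equation*}
is a convex combination of the $m_n$. Thus the numerical range of $U^*\Lambda_m U$ lies in the closed convex hull of $m$, and since the spectrum of any bounded operator is contained in the closure of its numerical range, I would conclude $\sigma(M_{m,\varphi,\psi})\sub\overline{\text{conv}}\,m$.

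I expect the main obstacle to be part \emph{(iii)}, and within it the crucial point that the canonical dual is exactly what turns the factorization into a conjugate of $U^*\Lambda_m U$ through an \emph{isometry} $U$; this is what simultaneously controls the whole spectrum, including the point $0$, which a bare numerical-range estimate on the $\ell^2$ side (e.g.\ via the projection $C_\varphi S^{-1}D_\varphi$) would leave unaccounted for. The remaining ingredients (self-adjointness of $S^{\pm\mez}$, the identity $C_\psi=C_\varphi S^{-1}$, and the inclusion of the spectrum in the closure of the numerical range) are standard, and I would only state them.
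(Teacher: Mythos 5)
Your argument is correct throughout. A caveat first: the paper does not actually prove Proposition~\ref{pertII} --- it is quoted verbatim from \cite[Propositions 2 and 3]{Corso_mult2} --- so the fair comparison is with the proof of the refined version, Theorem~\ref{th_frames}. For parts \emph{(i)} and \emph{(ii)} you do exactly what the paper does there: exploit \eqref{dual_frames} to write $M_{m,\varphi,\psi}-\mu I=M_{m-\mu,\varphi,\psi}$, bound the spectral radius of the shifted multiplier by $\sup_n|m_n-\mu|\,{B_\varphi}^\mez{B_\psi}^\mez$ via Proposition~\ref{spec_intro}, and translate the spectrum back; \emph{(ii)} is the special case $\mu=\frac12(\sup_n m_n+\inf_n m_n)$, $R=\frac12(\sup_n m_n-\inf_n m_n)$. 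The genuine divergence is in part \emph{(iii)}: the paper simply cites \cite[Proposition 2]{Corso_mult2}, whereas you supply a complete self-contained argument. Your chain is sound: $C_\psi=C_\varphi S^{-1}$ for the canonical dual, the similarity $S^{-\mez}M_{m,\varphi,\psi}S^{\mez}=U^*\Lambda_m U$ with $U=C_\varphi S^{-\mez}$ an isometry ($U^*U=I$), the computation $\pin{U^*\Lambda_m Uf}{f}=\sum_n m_n|(Uf)_n|^2$ with $\sum_n|(Uf)_n|^2=1$ placing the numerical range in the closed convex hull of $m$ (the countable convex combination is a limit of normalized finite ones, since $m$ is bounded), and finally the standard inclusion of the spectrum in the closure of the numerical range. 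Your closing remark is also well taken: working on $\H$ after conjugation, rather than on $\ell^2(\N)$ with the projection $C_\varphi S^{-1}D_\varphi$, is what keeps the possible spectral value $0$ under control, since $M_{m,\varphi,\psi}$ and $M_mG_{\varphi,\psi}$ need only share their nonzero spectrum. In short: \emph{(i)}--\emph{(ii)} match the paper's method for Theorem~\ref{th_frames}; \emph{(iii)} is a correct proof of a statement the paper only cites.
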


One of the two main results of this paper, which is right below, gives an estimate of the spectral radius of a Bessel multiplier in terms of the cross Gram operator $G_{\varphi,\psi}$ \cite{Balazs08} of $\varphi$ and $\psi$ which is recalled in Section \ref{sec:pre}. A direct consequence is an improvement of Proposition \ref{spec_intro}. 

\begin{theo}
	\label{th_main}
	Let $\varphi,\psi$ be Bessel sequences of $\H$ with cross Gram operator $G_{\varphi,\psi}$ and let $m\in \ell^\infty(\N)$. Let $M_m$ be the multiplication operator by $m$ on $\ell^2(\N)$. Then $M_{m,\varphi,\psi}$ and $M_mG_{\varphi,\psi}$ have the same spectral radius
	\begin{equation}
		\label{bound_radius0}
		r(M_{m,\varphi,\psi})=  r(M_mG_{\varphi,\psi}).
	\end{equation}
	In particular, the following bound holds 
	\begin{equation}
		\label{bound_radius}
		r(M_{m,\varphi,\psi})\leq  \sup_n |m_n| \;\! \|G_{\varphi,\psi}\|.
	\end{equation}
	Therefore, the spectrum of any Bessel multiplier $M_{m,\varphi,\psi}$ is contained in the closed disk centered the origin with radius $ \sup_n |m_n| \;\! \|G_{\varphi,\psi}\|$. 
\end{theo}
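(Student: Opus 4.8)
The plan is to realise both operators as the two orders of a single product and then invoke the classical fact that such products share their nonzero spectrum. Recall from Section~\ref{sec:pre} the analysis operator $C_\psi\colon\H\to\ell^2(\N)$, $C_\psi f=\{\pin{f}{\psi_n}\}_{n}$, the synthesis operator $D_\varphi\colon\ell^2(\N)\to\H$, $D_\varphi c=\sum_n c_n\varphi_n$, and the cross Gram operator, which in these terms reads $G_{\varphi,\psi}=C_\psi D_\varphi$. With this notation the multiplier factorises as $M_{m,\varphi,\psi}=D_\varphi M_m C_\psi$, where $M_m$ is multiplication by $m$ on $\ell^2(\N)$.

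First I would set $A:=M_m C_\psi\colon\H\to\ell^2(\N)$ and $B:=D_\varphi\colon\ell^2(\N)\to\H$. Then a direct computation gives $BA=D_\varphi M_m C_\psi=M_{m,\varphi,\psi}$ as an operator on $\H$, while $AB=M_m C_\psi D_\varphi=M_m G_{\varphi,\psi}$ as an operator on $\ell^2(\N)$. Thus $M_{m,\varphi,\psi}$ and $M_m G_{\varphi,\psi}$ are exactly the two products $BA$ and $AB$ of the same pair of bounded operators.

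Next I would apply the classical result that for bounded operators $A\in\B(\H,\ell^2(\N))$ and $B\in\B(\ell^2(\N),\H)$ one has $\sigma(AB)\back\{0\}=\sigma(BA)\back\{0\}$, so the two spectra can differ only at the point $0$. Since a bounded operator on a complex Hilbert space has nonempty spectrum and the spectral radius is $r(T)=\sup\{|\lambda|:\lambda\in\sigma(T)\}$, the extra point $0$ does not affect the supremum; hence $r(M_{m,\varphi,\psi})=r(M_m G_{\varphi,\psi})$, which is \eqref{bound_radius0}. The bound \eqref{bound_radius} then follows from $r(T)\leq\n{T}$ applied to $T=M_m G_{\varphi,\psi}$, together with $\n{M_m G_{\varphi,\psi}}\leq\n{M_m}\,\n{G_{\varphi,\psi}}=\sup_n|m_n|\,\n{G_{\varphi,\psi}}$, using that the norm of the multiplication operator $M_m$ on $\ell^2(\N)$ equals $\sup_n|m_n|$. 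Finally, the containment of $\sigma(M_{m,\varphi,\psi})$ in the closed disk of radius $\sup_n|m_n|\,\n{G_{\varphi,\psi}}$ is immediate, since every spectral value is bounded in modulus by $r(M_{m,\varphi,\psi})$.

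The main obstacle I anticipate is one of care rather than depth: the identity $\sigma(AB)\back\{0\}=\sigma(BA)\back\{0\}$ must be used in the version valid for operators acting between \emph{different} spaces (here $\ell^2(\N)$ and $\H$), where $0$ may genuinely belong to one spectrum but not the other; the argument must therefore route the equality of spectral radii through the nonzero parts of the spectra only, which is exactly why the statement concerns $r(\cdot)$ and not $\sigma(\cdot)$ itself. One should also confirm that the convention for $G_{\varphi,\psi}$ fixed in Section~\ref{sec:pre} is the one making $M_m G_{\varphi,\psi}=AB$ rather than $G_{\varphi,\psi}M_m$; if the other convention is in force, the same conclusion follows by swapping the roles of $A$ and $B$, since $M_m G_{\varphi,\psi}$ and $G_{\varphi,\psi}M_m$ likewise share their nonzero spectrum.
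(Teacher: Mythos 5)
Your proof is correct, and it takes a genuinely different route from the paper's. Both arguments start from the same factorisation $M_{m,\varphi,\psi}=D_\varphi M_m C_\psi$ and $M_m G_{\varphi,\psi}=M_m C_\psi D_\varphi$, i.e.\ from recognising the two operators as $BA$ and $AB$ for $A=M_mC_\psi$ and $B=D_\varphi$. From there you invoke the classical identity $\sigma(AB)\setminus\{0\}=\sigma(BA)\setminus\{0\}$ (valid for bounded operators between different spaces) and pass to spectral radii. The paper instead stays with the Gelfand formula $r(T)=\lim_N\|T^N\|^{1/N}$: it writes $M_{m,\varphi,\psi}^N=D_\varphi(M_mG_{\varphi,\psi})^{N-1}M_mC_\psi$, takes norms, and lets $N\to\infty$ to get $r(M_{m,\varphi,\psi})\le r(M_mG_{\varphi,\psi})$, then runs the symmetric computation $(M_mG_{\varphi,\psi})^{N+1}=M_mC_\psi M_{m,\varphi,\psi}^N D_\varphi$ for the reverse inequality. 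Your route is shorter and actually yields more than the theorem states, namely that the two spectra coincide off the origin, with the equality of radii as an immediate corollary (after the small observation, which you correctly make, that the possible discrepancy at $0$ cannot affect the supremum, since a bounded operator on a nonzero complex Hilbert space has nonempty spectrum). The paper's route buys self-containedness: it needs only the spectral radius formula and submultiplicativity of the norm, and never has to appeal to the two-space version of the $\sigma(AB)$ versus $\sigma(BA)$ identity. Your handling of the remaining steps --- $r(M_mG_{\varphi,\psi})\le\|M_m\|\,\|G_{\varphi,\psi}\|=\sup_n|m_n|\,\|G_{\varphi,\psi}\|$ and the resulting disk containment --- matches the paper exactly.
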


Theorem \ref{th_frames}, concerning dual frames, is instead the counterpart of Proposition \ref{pertII} which involve the cross Gram operator. Both in Theorems \ref{th_main} and \ref{th_frames} the constant ${B_\varphi}^\mez {B_\psi}^\mez$ present in Propositions \ref{spec_intro} and \ref{pertII} is substituted by the norm $\|G_{\varphi,\psi}\|$ of $G_{\varphi,\psi}$. Since the inequality $\|G_{\varphi,\psi}\|\leq {B_\varphi}^\mez {B_\psi}^\mez$ always holds, Theorems \ref{th_main} and \ref{th_frames} improve in fact Propositions \ref{spec_intro} and \ref{pertII}. In connections to the main results, throughout the paper we will discuss some remarks and examples.

\section{Preliminaries}
\label{sec:pre}

We denote by $\ell^2(\N)$ (respectively, $\ell^\infty(\N)$)  the usual spaces of square summable (respectively, bounded) complex sequences indexed by $\N$. \\
Given two Bessel sequences $\varphi$ and $\psi$ of $\H$ the following operators can be defined (see \cite{Balazs08,Chris}):
\begin{itemize}
	\item $C_\varphi:\H\to \ell^2(\N)$, defined by $C_\varphi f=\{\pin{f}{\varphi_n}\}$, is the {\it analysis operator} of $\varphi$.
	\item $D_\varphi:\ell^2(\N) \to \H$, defined by $D_\varphi \{c_n\}=\sum_{n\in \N} c_n \varphi_n$, is the {\it synthesis operator} of $\varphi$.
	\item $S_\varphi:\H \to \H$, $S_\varphi=D_\varphi C_\varphi $ is called the {\it frame operator} of $\varphi$; the action of $S_\varphi$ is 
	$$
	S_\varphi f =\sum_{n\in \N} \pin{f}{\varphi_n}\varphi_n \qquad f\in \H. 
	$$
	\item $G_{\varphi,\psi}:\ell^2(\N)\to \ell^2(\N)$, $G_{\varphi,\psi}=C_\psi D_\varphi$, is the {\it cross Gram operator} of $\varphi$ and $\psi$ which acts as $G_{\varphi,\psi}\{c_n\}=\{d_k\}$, where $d_k=\sum_{n\in \N} c_n\pin{ \varphi_n}{\psi_k}$. In other words, $G_{\varphi,\psi}$ can be associated to the matrix $(\pin{ \varphi_n}{\psi_k})_{n,k\in \N}$.
\end{itemize}
Moreover, $C_\varphi$ and $D_\varphi$ are one the adjoint of the other one, $C_\varphi=D_\varphi^*$, and $\|C_\varphi\|=\|D_\varphi\|\leq{B_\varphi}^\mez$ where $B_\varphi$ is a Bessel bound of $\varphi$. Consequently,  $S_\phi$ is a positive self-adjoint operator and it is also invertible with bounded inverse $S_\phi^{-1}$ on $\H$. 
We recall that in the introduction we gave the definition of dual frames. A frame $\phi$ always has a dual frame, namely the sequence $\{S_\varphi^{-1}\varphi_n\}_{n\in \N}$, which is the so-called {\it canonical dual} of $\phi$. 

Finally, we note that, introducing the operators $D_\varphi$ and $C_\psi$, it is possible to write $M_{m,\varphi,\psi}=D_\varphi M_m C_\psi$ where $M_m$ is the multiplication operator by $m$ on $\ell^2(\N)$, defined by $M_m\{c_n\}=\{m_nc_n\}$ for $\{c_n\}\in \ell^2(\N)$.

\section{Proofs of the main results}
\label{sec_bas}

Theorem \ref{th_main} concerns the spectral radius of a Bessel multiplier. For a bounded operator $T:\H\to \H$, we write $\sigma(T)$ for the spectrum and $r(T):=\sup\{|\lambda|:\lambda \in \sigma(T)\}$ for the {\it spectral radius} (see, for instance, \cite{Conway,Kato,Schm}). The spectral radius represents then the radius of the smallest disk centered in the origin and containing the spectrum. Propositions \ref{spec_intro} and \ref{pertII} can be restated in terms of spectral radius. For example, we can say that for any Bessel multiplier $M_{m,\varphi,\psi}$ we have $r(M_{m,\varphi,\psi})\leq \sup_{n} |m_n| {B_\varphi}^\mez {B_\psi}^\mez$.

For the proof of Theorem \ref{th_main} below we are going to use some classical results about the spectral radius (see e.g. \cite[Proposition 3.8]{Conway}): for every bounded operator $T:\H \to \H$ we have 
\begin{equation}
	\label{sp_rad1}
r(T)=\lim_{N\to +\infty} \|T^N\|^{\frac 1 N},
\end{equation} 
and 
\begin{equation}
	\label{sp_rad2}
	r(T)\leq \|T\|.
\end{equation}

\begin{proof}[Proof of Theorem \ref{th_main}]
	If $B_\varphi=0$, $B_\psi=0$  or $m\equiv 0$, then \eqref{bound_radius} trivially holds, because both the operators $M_{m,\varphi,\psi}$ and $M_mG_{\varphi,\psi}$ are null. So we can assume that $B_\varphi,B_\psi>0$ and that $m$ is not identically null. \\
	Since $M_{m,\varphi,\psi}=D_\varphi M_m C_\psi$ and $G_{\varphi,\psi}=C_\psi D_\varphi$, then for $N\geq 2$ we have 
	\begin{align*}
		M_{m,\varphi,\psi}^N&=(D_\varphi M_m C_\psi)^N=D_\varphi  (M_m C_\psi D_\varphi)^{N-1} M_m C_\psi =D_\varphi  (M_m G_{\varphi,\psi})^{N-1} M_m C_\psi.
	\end{align*}
	Therefore, 
	$$
		\|M_{m,\varphi,\psi}^N\|\leq \|(M_m G_{\varphi,\psi})^{N-1}\| \|M_m\|\|C_\psi\|\|D_\varphi\|.
	$$
	Thus, by \eqref{sp_rad1}, 
	\begin{align*}
		r(M_{m,\varphi,\psi})&=\lim_{N \to +\infty} \|M_{m,\varphi,\psi}^N\|^{\frac 1 N}\leq \lim_{N \to +\infty} (  \|(M_m G_{\varphi,\psi})^{N-1}\| \|M_m\|\|C_\psi\|\|D_\varphi\|)^{\frac 1 N}\\
		&= \lim_{N \to +\infty}   \|(M_m G_{\varphi,\psi})^{N-1}\|^{\frac 1 N} \lim_{N \to +\infty} (\|M_m\|\|C_\psi\|\|D_\varphi\|)^{\frac 1 N}=r(M_m G_{\varphi,\psi}). 
	\end{align*}
For the reverse inequality we observe that 
\begin{align*}
	(M_m G_{\varphi,\psi})^{N+1}&= M_m C_\psi D_\varphi  (M_m G_{\varphi,\psi})^{N-1} M_m C_\psi D_\varphi= M_m C_\psi M_{m,\varphi,\psi}^ND_\varphi.
\end{align*}
Hence, with an analog calculation as before we find that
$	r(M_m G_{\varphi,\psi})\leq r(M_{m,\varphi,\psi})$, so in conclusion \eqref{bound_radius0} is proved. 
Lastly, \eqref{bound_radius} holds because by \eqref{bound_radius0} and \eqref{sp_rad2} we have 
\[
r(M_{m,\varphi,\psi})=r(M_m G_{\varphi,\psi})\leq \|M_m G_{\varphi,\psi}\|\leq \|M_m\|\| G_{\varphi,\psi}\|=\sup_n |m_n| \;\! \|G_{\varphi,\psi}\|. \qedhere\]
\end{proof}

\begin{rems}
	\label{rems}
	\begin{enumerate}
		\item[(i)] Inequality \eqref{bound_radius} may be strict. In fact, let $\{e_n\}$ be an orthonormal basis of $\H$, $\varphi=\{e_n\}$, $\psi=\{\frac 12e_1,2e_2,\frac 12e_3,2e_4, \dots\}$ and $m=\{2,\frac 12,2,\frac 12, \dots \}$. A trivial calculation shows that $M_{m,\varphi,\psi}$ is the identity operator, so $r(M_{m,\varphi,\psi})=1$, while $\sup_n |m_n| \;\! \|G_{\varphi,\psi}\|=4$. 
		\item[(ii)] A Riesz basis $\phi$ for $\H$ is the image of an orthonormal basis $\{e_n\}$ of $\H$ through an bounded operator with bounded inverse defined on $\H$ \cite{Chris}. A Riesz basis $\phi$ is, in particular, a frame for $\H$ and it has a unique dual $\psi$ (the canonical one) which is a Riesz basis too. Moreover,  $$\pin{\phi_n}{\psi_k}=\delta_{n,k}=\begin{cases}
			1  \qquad n=k\\
			0  \qquad n\neq k.
		\end{cases}$$
		Therefore, if $\phi$ is a Riesz basis for $\H$ and $\psi$ is its canonical dual, then $G_{\varphi,\psi}$ is the identity operator on $\ell^2(\N)$ and so  $\|G_{\varphi,\psi}\|=1$.  Anyway, for this choice of $\phi,\psi$, \eqref{bound_radius} is an immediate consequence of the fact that $\sigma(M_{m,\varphi,\psi})$ is the closure of $\{m_n:n\in \N\}$ (see \cite[Proposition 4]{Corso_mult1}). 
		\item[(iii)] Since $G_{\varphi,\psi}=C_\psi D_\varphi$, we always have  
		\begin{equation}
			\label{st_G}
			\|G_{\varphi,\psi}\|\leq \|C_\psi\|\| D_\varphi\|\leq {B_\varphi}^\mez {B_\psi}^\mez.
		\end{equation} 
		Therefore, Theorem \ref{th_main} is finer than Proposition \ref{spec_intro}. 
		Moreover, if $\phi=\psi$, then $G_{\varphi,\varphi}=C_\varphi D_\varphi=D_\varphi^* D_\varphi$ is a positive self-adjoint operator, so $\|G_{\varphi,\varphi}\|=\|D_\varphi\|^2$. 
	\end{enumerate}
\end{rems}		

Besides with \eqref{st_G} it is possible to estimate the norm of $G_{\varphi,\psi}$ with some other considerations which we discuss below. 
\begin{rems}
	\begin{enumerate}
		\item[(i)] An estimate of $\|G_{\varphi,\psi}\|$ can be given if 
		\begin{equation}
			\label{rem_schur}
			\displaystyle \sup_{k\in \N} \sum_{n\in \N} |\pin{\varphi_n}{\psi_k}|\leq \Gamma_1 \;\text{ and }\; \displaystyle \sup_{n\in \N} \sum_{k\in \N} |\pin{\varphi_n}{\psi_k}|\leq \Gamma_2.
		\end{equation} Indeed, by Schur test (see for instance \cite[Lemma 6.2.1]{Groechenig_b}), we have $\|G_{\varphi,\psi}\|\leq \Gamma_1^\mez \Gamma_2^\mez$.
		\item[(ii)] Let $\phi,\psi$ be Bessel sequences of $\H$ such that  $\pin{\varphi_n}{\psi_k}=0$ for $n,k\in \N$ with $|n-k|>d$.  As particular case of the previous remark, if $\displaystyle \sum_{i=-d}^{d} \sup_{n}|\pin{\varphi_n}{\psi_{n+i}}|\leq \Gamma$
		(where, with an abuse of notation, we mean $\psi_{-d+1},\dots, \psi_{-1}, \psi_{0}=0$) then $\|G_{\varphi,\psi}\|\leq  \Gamma$.
		\item[(iii)] Another use of conditions \eqref{rem_schur} can be made in the context of localized frames \cite{BCHL,CG,Groechenig_l,GF}.
	\end{enumerate}
\end{rems}

In what follows we give another example where in particular it is possible to exactly calculate the norm of the cross Gram operator. 

\begin{exm}
	Let $\mathcal G$ be a countable locally compact abelian group equipped with the discrete topology. We write the group operation of $\mathcal G$ in the additive notation and we denote by $\widehat{\mathcal G}$ the dual group of $\mathcal G$ (i.e. the multiplicative group of the characters on $\mathcal G$). Since $\mathcal G$ is discrete, then $\widehat{\mathcal G}$ is compact (see \cite[Proposition 4.4]{Folland}). Moreover, we will choose the Haar measure on $\mathcal G$ to be the counting measure; hence by \cite[Proposition 4.24]{Folland}, $|\widehat{\mathcal G}|=1$. 
	
	Let $\tau$ be a unitary representation of $\mathcal G$ on $\H$. In particular, let us assume that $\tau$ is dual integrable \cite{HSWW}, i.e. there exist a Haar measure $d\xi$ and a function $[\cdot, \cdot]:\H \times \H \to L^1(\widehat{\mathcal G},d\xi)$ such that 
	\begin{equation}
		\pin{\chi}{\tau_g \eta}=\int_{\widehat{\mathcal G}} [\chi,\eta](\xi)e_{-g}(\xi) d\xi \qquad \forall g\in \mathcal G, \chi, \eta \in \H,
	\end{equation}
	where $e_{-g}(x)$ is the character induced by $-g$, namely $e_{-g}(\xi)=e^{-2\pi i (g,\xi)}$, and $(\cdot,\cdot)$ is the duality between $\mathcal G$ and $\widehat{\mathcal G}$. 
	The function $[\cdot, \cdot]$ is called the {\it bracket function}.  
	Classical examples (treated for instance in \cite{Chris,Groechenig_b}) of this framework are
	\begin{itemize}
		\item $\mathcal G=\Z^d$, $\widehat{\mathcal G}=\mathbb{T}^n$, $\H=L^2(\R)$, $(\tau_k f)(x)=(T_k f)(x)=f(x-k)$ for $k\in \Z^d$ and 
		$$
		[\chi,\eta](\xi)=\sum_{k\in \Z^d} \hat \chi (\xi+k)\overline{\hat \eta (\xi+k)}, \qquad \xi \in \R^d, \chi,\eta \in L^2(\R^d),
		$$
		where $\hat \chi$ and $\hat \eta$ are the Fourier transforms of $\chi$ and $\eta$, respectively;
		\item $\mathcal G=\Z^d\times \Z^d$, $\widehat{\mathcal G}=\mathbb{T}^n$, $\H=L^2(\R)$, $(\tau_{(k,l)} f)(x)=(T_k M_l f)(x)=e^{2\pi i l \cdot x}f(x-k)$ for $(k,l)\in \Z^d\times \Z^d$ and 
		$$
		[\chi,\eta](x,\xi)=Z\chi(x,\xi)\overline{Z\eta(x,\xi)}, \qquad x,\xi \in \R^d, \chi,\eta \in L^2(\R^d),
		$$
		where $Z\chi(x,\xi)=\sum_{k\in \Z^d} e^{-2\pi i k \cdot \xi}\chi(x-k)$ is the Zak transform of $\chi\in L^2(\R^d)$. 
	\end{itemize}
	After introducing this setting, we now consider two special sequences\footnote{In this example, the sequences are indexed by the countable set $\mathcal G$ in contrast to the setting of the rest of the paper. However, this does not change the validity of Theorems \ref{th_main} and \ref{th_frames} since the series defining a multiplier is unconditionally convergent so the ordering of a Bessel sequence is not relevant (see \cite{Chris,Groechenig_b}).}. More precisely, let $\chi,\eta\in \H$ be such that $\varphi=\{T_g \chi \}_{g\in \mathcal G}$ and $\psi=\{T_g \eta \}_{g\in \mathcal G}$ are Bessel sequences\footnote{This happen if and only if $[\chi,\chi]$ and $[\eta,\eta]$ are bounded above a.e. in $\widehat{\mathcal G}$, see \cite[Section 5]{HSWW}.} of $\H$. As we are going to see, the norm $\|G_{\varphi,\psi}\|$ can be exactly calculated in terms of $[\chi,\eta]$. Indeed, for any complex sequences $\{c_g\}_{g\in \mathcal G},\{d_g\}_{g\in \mathcal G}\in \ell^2(\mathcal G)$ we have 
	\begin{equation}
		\label{G_mult}
		\begin{aligned}
			\pin{G_{\varphi,\psi}\{c_g\}}{\{d_g\}}&=\pin{C_{\psi}D_{\varphi}\{c_g\}}{\{d_g\}}=\pin{D_{\varphi}\{c_g\}}{D_{\psi}\{d_g\}}\\
			&=\left \l \sum_{g\in \mathcal G} c_gT_g \chi,\sum_{g\in \mathcal G} d_gT_g \eta \right \r= \sum_{g,h\in \mathcal G} c_g\overline{d_h} \pin{T_g \chi}{T_h \eta}\\
			&=\sum_{g,h\in \mathcal G} c_g\overline{d_h} \pin{\chi}{T_{h-g} \eta}= \sum_{g,h\in \mathcal G} c_g\overline{d_h} \int_{\widehat{\mathcal G}} [\chi,\eta](\xi)e_{g-h}(\xi) d\xi\\
			&=\int_{\widehat{\mathcal G}} [\chi,\eta](\xi)\sum_{g,h\in \mathcal G} c_g\overline{d_h} e_{g-h} (\xi) d\xi\\
			&=\int_{\widehat{\mathcal G}} [\chi,\eta](\xi)\sum_{g\in \mathcal G} c_g e_g(\xi) \overline{\sum_{h\in \mathcal G}d_h e_{h} (\xi)} d\xi.
		\end{aligned}
	\end{equation}
	By the Pontrjagin duality theorem and by \cite[Corollary 4.26]{Folland}, $\{e_g\}_{g\in \mathcal G}$ is an orthonormal basis of $L^2(\widehat{\mathcal G},d\xi)$. 
	This fact, together with \eqref{G_mult}, implies that the Gram operator $G_{\varphi,\psi}$ can be reduced to the multiplication operator by $[\chi,\eta]$ on $L^2(\widehat{\mathcal G},d\xi)$. Hence, we conclude that 
	$$\|G_{\varphi,\psi}\|=\sup_{\{c_g\},\{d_g\}\neq 0}\frac{|\pin{G_{\varphi,\psi}\{c_g\}}{\{d_g\}}|}{\|\{c_g\}\|\|\{d_g\}\|}= \sup_{\xi\in \widehat{\mathcal G}} |[\chi,\eta](\xi)|,
	$$
	i.e. the essential supremum of $[\chi,\eta]$ (see \cite[Example 2.11 - Ch. III]{Kato}). Thus, by Theorem \ref{th_main}, given a bounded complex sequence $m=\{m_g\}_{g\in \mathcal G}$ we have 
	\begin{equation*}
		r(M_{m,\varphi,\psi})\leq  \sup_{g\in \mathcal G} |m_g|  \sup_{\xi\in \widehat{\mathcal G}} |[\chi,\eta](\xi)|.
	\end{equation*}
\end{exm}

We now move to prove the result for dual frames. In particular, it provides regions containing the spectrum which are smaller than the disk of Theorem \ref{th_main}.

\begin{theo}
	\label{th_frames}
	Let $\varphi,\psi$ be dual frames for $\H$ and let $m\in \ell^\infty(\N)$. 
	\begin{enumerate}
		\item If $m$ is contained in the disk of center $\mu$ with radius $R$, then $\sigma(M_{m,\varphi,\psi})$ is contained in the disk of center $\mu$ with radius $R\|G_{\varphi,\psi}\|$.
		\item If $m$ is real, then $\sigma(M_{m,\varphi,\psi})$ is contained in the disk of center $$\frac 12 (\sup_{n} m_n +\inf_{n} m_m)$$ with radius $$\frac 12 (\sup_{n} m_n -\inf_{n} m_m)\|G_{\varphi,\psi}\|.$$
		\item If $\psi$ is the canonical dual of $\varphi$, then $\sigma(M_{m,\varphi,\psi})$ is contained in the closed convex hull of $m$.  
	\end{enumerate} 
\end{theo}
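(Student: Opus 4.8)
The plan is to derive all three parts from Theorem~\ref{th_main} combined with the single structural fact that $\varphi,\psi$ are dual frames, namely the identity $D_\varphi C_\psi = I$ encoded in \eqref{dual_frames}. For part (i) I would first \emph{recenter the symbol}: set $m'=m-\mu$, i.e.\ $m'_n=m_n-\mu$, so that $\sup_n|m'_n|\le R$, while the cross Gram operator is unchanged since $G_{\varphi,\psi}$ depends only on $\varphi,\psi$. Writing $M_m=M_{m'}+\mu I$ on $\ell^2(\N)$ and using $M_{m,\varphi,\psi}=D_\varphi M_m C_\psi$ together with $D_\varphi C_\psi=I$ gives
\[
M_{m,\varphi,\psi}=D_\varphi M_{m'}C_\psi+\mu\,D_\varphi C_\psi=M_{m',\varphi,\psi}+\mu I,
\]
whence $\sigma(M_{m,\varphi,\psi})=\mu+\sigma(M_{m',\varphi,\psi})$. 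Applying Theorem~\ref{th_main} to $m'$ yields $r(M_{m',\varphi,\psi})\le \sup_n|m'_n|\,\|G_{\varphi,\psi}\|\le R\,\|G_{\varphi,\psi}\|$, so $\sigma(M_{m',\varphi,\psi})$ sits in the disk of center $0$ and radius $R\,\|G_{\varphi,\psi}\|$; translating by $\mu$ produces the claimed disk.

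Part (ii) is then immediate as the real-symbol instance of (i): if $m$ is real, every $m_n$ lies in the interval $[\inf_n m_n,\sup_n m_n]$, which is exactly the disk of center $\mu=\mez(\sup_n m_n+\inf_n m_n)$ and radius $R=\mez(\sup_n m_n-\inf_n m_n)$, so part (i) applies directly and gives the stated region.

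Part (iii) is the subtle one, and I would treat it by reducing to a Parseval frame via a similarity. With $\psi$ the canonical dual one has $\psi_n=S_\varphi^{-1}\varphi_n$; I introduce the Parseval frame $\tilde\varphi_n=S_\varphi^{-1/2}\varphi_n$, for which $S_{\tilde\varphi}=S_\varphi^{-1/2}S_\varphi S_\varphi^{-1/2}=I$. Inserting $S_\varphi^{\pm1/2}$ and using the self-adjointness of $S_\varphi^{-1/2}$ (so that $\langle f,S_\varphi^{-1/2}\varphi_n\rangle=\langle S_\varphi^{-1/2}f,\varphi_n\rangle$), a direct computation gives the intertwining relation
\[
M_{m,\varphi,\psi}=S_\varphi^{1/2}\,M_{m,\tilde\varphi,\tilde\varphi}\,S_\varphi^{-1/2}.
\]
Since $S_\varphi^{1/2}$ is bounded with bounded inverse, this is a genuine similarity, and therefore $\sigma(M_{m,\varphi,\psi})=\sigma(M_{m,\tilde\varphi,\tilde\varphi})$.

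It then remains to control the spectrum of the self-dual Parseval-frame multiplier through its numerical range. Because $\tilde\varphi$ is Parseval, $C_{\tilde\varphi}$ is an isometry, so for a unit vector $f$, setting $c=C_{\tilde\varphi}f$ one has $\|c\|=1$ and
\[
\langle M_{m,\tilde\varphi,\tilde\varphi}f,f\rangle=\langle M_m c,c\rangle=\sum_{n\in\N} m_n\,|c_n|^2,
\]
which is a convex combination of the values $m_n$ with weights $|c_n|^2$ summing to $1$, hence an element of the closed convex hull of $\{m_n:n\in\N\}$. Thus the numerical range of $M_{m,\tilde\varphi,\tilde\varphi}$ is contained in that closed convex hull, and since the spectrum of any bounded operator lies in the closure of its numerical range, $\sigma(M_{m,\varphi,\psi})=\sigma(M_{m,\tilde\varphi,\tilde\varphi})$ is contained in the closed convex hull of $m$. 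I expect the main obstacle to be spotting the similarity in part (iii) and verifying the intertwining identity cleanly; once the problem is reduced to a Parseval frame that is its own dual, the numerical-range estimate is routine.
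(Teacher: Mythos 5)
Your proposal is correct. For parts (i) and (ii) it follows exactly the paper's route: the dual-frame identity $D_\varphi C_\psi=I$ gives $M_{m,\varphi,\psi}-\mu I=M_{m-\mu,\varphi,\psi}$, the spectral-radius bound \eqref{bound_radius} of Theorem \ref{th_main} is applied to the recentered symbol, and (ii) is the special case $\mu=\mez(\sup_n m_n+\inf_n m_n)$, $R=\mez(\sup_n m_n-\inf_n m_n)$. The only divergence is in part (iii): the paper does not prove it at all but simply cites \cite[Proposition 2]{Corso_mult2}, whereas you supply a complete self-contained argument via the similarity $M_{m,\varphi,\psi}=S_\varphi^{1/2}M_{m,\tilde\varphi,\tilde\varphi}S_\varphi^{-1/2}$ with $\tilde\varphi=S_\varphi^{-1/2}\varphi$ the canonical Parseval frame, followed by the observation that $\langle M_{m,\tilde\varphi,\tilde\varphi}f,f\rangle=\sum_n m_n|\langle f,\tilde\varphi_n\rangle|^2$ is an (infinite) convex combination of the $m_n$ for unit $f$, so the numerical range, and hence the spectrum, lies in the closed convex hull of $m$. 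The intertwining identity checks out (it uses only the self-adjointness of $S_\varphi^{-1/2}$ and $\psi_n=S_\varphi^{-1}\varphi_n$), and the passage from numerical range to spectrum is the standard fact for bounded Hilbert-space operators; your version buys a reader a proof that is independent of the external reference, at the cost of invoking the square root $S_\varphi^{\pm 1/2}$ and the numerical-range inclusion, neither of which the paper needs to state explicitly.
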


\begin{proof}
To prove statement (i), let us consider a disk of center $\mu$ with radius $R$ containing the sequence $m$. By \eqref{dual_frames} we have 
$$
M_{m,\varphi,\psi} -\mu I=\sum_{n\in \N} (m_n-\mu)\pin{f}{\psi_n} \varphi_n=M_{m-\mu,\varphi,\psi},
$$
where with $m-\mu$ we mean the sequence $\{m_n-\mu\}$. Therefore applying \eqref{bound_radius} to $M_{m-\mu,\varphi,\psi}$, we obtain 
$$r(M_{m,\varphi,\psi}-\mu I)\leq  \sup_n |m_n-\mu| \;\! \|G_{\varphi,\psi}\|\leq R \|G_{\varphi,\psi}\|,$$
which means that $\sigma(M_{m,\varphi,\psi})$ is contained in the disk of center $\mu$ with radius $R\|G_{\varphi,\psi}\|$, because $\sigma(M_{m,\varphi,\psi})=\{\lambda+\mu : \lambda \in \sigma(M_{m-\mu,\varphi,\psi})\}$. 

Statement (ii) is a consequence of (i) taking $\mu=\frac 12 (\sup_{n} m_n +\inf_{n} m_m)$ and $R=\frac 12 (\sup_{n} m_n -\inf_{n} m_m)$. Finally, statement (iii) was proved in \cite[Proposition 2]{Corso_mult2}. 
\end{proof}

By \eqref{st_G} we can make a similar observation of Remark \ref{rems}(iii), that is Theorem \ref{th_frames} is stronger than Proposition \ref{pertII}. We conclude with a comment for the case of a frame and its canonical dual.

\begin{rem}
	Let $\phi$ and $\psi$ be dual frames. Making use of inequality \eqref{bound_radius} (taking $m_n=1$ for every $n\in \N$), we find that $\|G_{\varphi,\psi}\|\geq 1$. \\If, in particular, $\psi$ is the canonical dual of $\phi$, then $$\pin{\varphi_n}{\psi_k}=\pin{\varphi_n}{S_\phi^{-1} \varphi_k}=\pin{S_\phi^{-\mez}  \varphi_n}{S_\phi^{-\mez} \varphi_k}.$$ In other words, $G_{\varphi,\psi}$ is equals to the Gram operator of the {\it canonical tight frame} $\chi:=S_\phi^{-\mez}\varphi$ of $\varphi$, which is a Parseval frame (i.e. it satisfies condition \eqref{def_frame} with $A_\chi=B_\chi=1$, see \cite[Theorem 6.1.1]{Chris}). Thus, for the initial observation and for Remark \ref{rems}(iii), if $\phi$ is a frame, $\psi$ is its canonical dual and $\chi=S_\phi^{-\mez}\varphi$, then we have $1\leq \|G_{\varphi,\psi}\|=\|D_{\chi}\|^2\leq B_\chi=1$, so $\|G_{\varphi,\psi}\|=1$. 
\end{rem}

\section*{Acknowledgments}

This work was partially supported by the European Union through the Italian Ministry of University and Research (FSE - REACT EU, PON Ricerca e Innovazione 2014-2020) and by the ``Gruppo Nazionale per l'Analisi Matematica, la Probabilità e le loro Applicazioni'' (INdAM).  This work has been done within the activities of the ``Gruppo UMI - Teoria dell’Approssimazione e Applicazioni''.

\vspace*{0.5cm}
\begin{center}
\textsc{Rosario Corso, Dipartimento di Matematica e Informatica} \\
\textsc{Università degli Studi di Palermo, I-90123 Palermo, Italy} \\
{\it E-mail address}: {\bf rosario.corso02@unipa.it}
\end{center}

\end{document}